\newtheorem{theorem}{Theorem}[section]
\newenvironment{proof}[1][Proof]{\textbf{#1.} }{\ \rule{0.5em}{0.5em}}
\author{Yves Le Jan}
\title{Loop interactions and their representations in Fock space} 
\begin{document}
\maketitle

\footnotetext{ Key words and phrases: Free fields, Markov loops, spanning trees, connections, Fock space}
\footnotetext{  AMS 2010 subject classification:  60J27, 60G60.}
\begin{abstract}

It has been observed that on a weighted graph, an extension of Wilson's algorithm provides an independent pair $(\cal T, \cal{L})$, $\cal T$ being a spanning tree and $\cal{L}$ a Poissonian loop ensemble. This association can be interpreted in the framework of symmetric and skew symmetric Fock spaces.
Given a weighted graph, we show how  to define two types of natural interactions which correspond to local interactions between two Fock spaces. The first type of interaction involves loop ensembles and spanning trees. The second type of interaction involves loop holonomies and random connections.
\end{abstract}

\section{Framework and definitions}

Consider a system of conductances on a finite connected graph $\cal{G}=(X,E)$ without loop edges nor multiple edges. After the choice of a root $\Delta$, we denote by $E$ the set of edges not incident to $\Delta$ and by $E^o$ the set of oriented such edges. Consider the energy defined on  $X=\mathcal{X}-\Delta $ by the conductances $C$ on edges of $E$ and the killing measure $\kappa_x=C_{x,\Delta}, \;x\in X$:
$$\mathfrak{E}(f,g)=\frac{1}{2}\sum_{x,y\in X }C_{x,y}(f(x)-f(y))(\bar g(x)-\bar g(y))+\sum_{x\in X }\kappa_{x}f(x)\bar g(x)$$
We set $\lambda_x=\sum_y C_{x,y} +\kappa_x$ and denote by $M_{\lambda}$ the diagonal matrix representing the multiplication by $\lambda$.
The Green function on $X\times X$ associated with $\mathfrak{E}$ is  $G= [M_{\lambda}-C]^{-1}$.
Recall that $\mathfrak{E}(G^{x,.},G^{y,.})= G^{x,y}$ \\

 Recall that an extension of Wilson's algorithm yields an independent pair$(\cal T, \cal{L})$, $\cal T$ being a spanning tree rooted in $\Delta$ and $\cal{L}$ a Poissonian loop ensemble on $X$ with intensity given by the loop measure $\mu$ defined by the $\lambda-$symmetric continuous time Markov chain associated wth $\frak{E}$ (see \cite{stfl}).  We denote by $\mathbb P_{\cal L}$ and $\mathbb P_{\cal T}$ their distributions.\\
 The loops are obtained by dividing, at each vertex, the concatenation of the erased excursions according to a Poisson-Dirichlet distribution (in continuous time), then by forgetting the base points. We denote by $N_e(\cal L)$ the number of crossings of the edge $e$ by the loops of $\cal L$,  by $N_{e^o}(\cal L)$ the number of crossings of the oriented edge $e^o$ by the loops of $\cal L$, and by $\hat{ \mathcal{L}}^x$ the total time spent by the loops of $\cal L$ at the vertex $x$ normalized by $\lambda_x$ .  \\
Recall that for any complex function $q$, $\vert q \vert\leq 1$, defined on the set $E^o$ of oriented edges, and $\chi\geq 0$ defined on $X$,  denoting by $\circ$ the Hadamard product,
\begin{equation}\label{equ0}
E(\prod_{e^o} q_{e^o}^{N_{e^o}(\cal L)}e^{-\sum_x\chi_x\hat {\cal L}^x}) =\frac{ \det (M_{\lambda}-C)}{\det(M_{\lambda+\chi}-C\circ q)}.
\end{equation}

\section{An interaction between tree and loops}
Given a parameter $0<\beta<1$ we can define an interacting pair $(\cal T,\cal L)$ by  the joint distribution:
$$\mathbb{P}^{(\beta)}_{\cal T, \cal L}(T,dL)=\frac{1}{Z^{(\beta)}} \prod _{e\not \in T}\beta^{N_e(\cal L)}\,\mathbb P_{\cal L}(dL)\,\mathbb P_{\cal T}(T),$$
$Z^{(\beta)}$ being a normalization constant.\\
As $\beta$ tends to 0, the loops of $\cal L$ tend to be carried by $\cal T$. In particular, they tend to have trivial holonomies, i.e. to be contractible to a point. As $\beta$ tends to 1, $\cal T$ and $ \cal{L}$ tend to be independent.\\
Similarly, Given a parameter $b>0$ we can define an interacting pair $(\cal T,\cal L)$ by  the joint distribution:
$$\mathbb{P}^{(b*)}_{\cal T, \cal L}(T,dL)=\frac{1}{Z^{(b*)}} \prod _{x, \{x\Delta \}\not \in T}e^{-b\hat{\mathcal {L}}^x}\,\mathbb P_{\cal L}(dL)\,\mathbb P_{\cal T}(T),$$
$Z^{(b*)}$ being a normalization constant.

 \section{Interaction in supersymmetric Fock space}
  The independent pair $(\cal T, \cal{L})$, associating a spanning tree   $\cal T$ and a Poissonian loop ensemble $\cal{L}$ can be interpreted in the framework of symmetric and skew symmetric Fock spaces (see \cite{stfl}).
 
 The partition function and more generally expectations of various functionals of the random pair $(\cal T,L)$ can be expressed  in terms of the supersymmetric Fock space associated with $G$. 
 First note that if $\phi$ denotes the complex Bose field and $\psi$ the Fermi field, it follows from (\ref{equ0}) and from Fock space calculations (see for example \cite{fock} and \cite{stfl}) that
 for any complex function $q$, $\vert q \vert\leq 1$, defined on the set $E^o$ of oriented edges, and $\chi\geq 0$ defined on $X$, denoting the vacuum state by 1,
 \begin{equation}\label{eq2} \begin{split}
 E(\prod_{e^o} q_{e^o}^{N_{e^o}(\cal L)}e^{-\sum_x\chi_x\hat {\cal L}^x}) =\langle 1,\; \exp(\frac{1}{2}\sum_{x,y}C_{x,y} [q_{x,y}-1] \phi(x)\bar\phi(y)-\frac{1}{2}\sum_{x}\chi_x \phi(x)\bar\phi(x )) \;1\rangle.  \\
\end{split}
 \end{equation}
  Note that the same representation can be given in terms of expectation of functionals of  complex Gaussian variables. This is in fact the usual terminology in probability but we are using Bose fields to emphasize the symmetry with the Fermi field.\\
 Also, for any function $b$ and $c$ defined on edges, setting $$\vert d\psi_{\{x,y\}}\vert^2=(\psi(x)-\psi(y))(\bar\psi(x)-\bar \psi(y))$$,
  \begin{equation}\label{eq3} \begin{split}
 E&(\prod_e (b_e1_{e\not\in \mathcal{ T}}+c_e 1_{e\in \mathcal{ T}}))=\langle 1,\; \prod_e(b_e(1-\vert d\psi_e\vert^2)+c_e \vert d\psi_e\vert^2
  \;1\rangle 
  \end{split}
  \end{equation}
   Note that the same representation can be given in terms of  complex differential forms (see the introduction of \cite{fock}), or in terms of and Grassmann integration (\cite{B}).\\
  Then we have the following representation of $\mathbb{P}^{(\beta)}_{\cal T, \cal L}$:
  \begin{theorem}\label{t1}
  For any $0<\beta<1$,\\
   $Z^{(\beta)}=\langle 1,\; \prod_{\{x,y\}}[(1-\vert d\psi_{\{x,y\}}\vert^2)e^{-\frac{1}{2}C_{x,y} [\beta-1](\phi(x)\bar\phi(y)+\phi(y)\bar\phi(x))} \vert+ d\psi_{\{x,y\}}\vert^2] \;1\rangle$\\
   
   More generally, for any $b,c$  defined on edges, $\chi \geq 0$ defined on vertices and any complex function $q$, $\vert q \vert\leq 1$, defined on oriented edges, the expression $ \int \prod_{e^o} q_{e^o}^{N_{e^o}(L)}(b_e1_{e\not\in \mathcal{ T}}+c_e 1_{e\in \mathcal{ T}})e^{-\sum_x\chi_x\hat L^x})\mathbb{P}^{(\beta)}_{\cal T, \cal L}(T,dL))$ equals:\\
   
   $\frac{1}{Z^{(\beta)}}\langle 1,\;e^{-\frac{1}{2}\sum_x\chi_x \phi(x)\bar\phi(x)}
    \prod_{\{x,y\}} [b_{\{x,y\}}e^{\frac{1}{2}C_{x,y}( [ \beta q_{(x,y)}-1]\phi(x)\bar\phi(y)+[ \beta q_{(y,x)}-1]\phi(y)\bar\phi(x))}(1-\vert d\psi_{\{x,y\}}\vert^2)
   + c_{\{x,y\}}  \prod_{\{x,y\}} e^{\frac{1}{2}C_{x,y}( [ q_{(x,y)}-1]\phi(x)\bar\phi(y)+[ q_{(y,x)}-1]\phi(y)\bar\phi(x))} \vert d\psi_{\{x,y\}}\vert^2]
   \;1\rangle$
\end{theorem}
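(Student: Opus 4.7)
The strategy is to establish the general $(b,c,q,\chi)$ identity first and to recover the $Z^{(\beta)}$ formula as its specialisation $b\equiv c\equiv 1$, $q\equiv 1$, $\chi\equiv 0$. Since $\mathcal T$ and $\mathcal L$ are independent under the reference product measure $\mathbb P_{\mathcal T}\otimes\mathbb P_{\mathcal L}$, the expectation in the claim, multiplied by $Z^{(\beta)}$, equals
\[
\mathbb E_{\mathcal L}\mathbb E_{\mathcal T}\!\left[\prod_{e^o}q_{e^o}^{N_{e^o}(\mathcal L)}e^{-\sum_x\chi_x\hat{\mathcal L}^x}\prod_{e=\{x,y\}}\bigl(b_e\beta^{N_e(\mathcal L)}\mathbf 1_{e\notin\mathcal T}+c_e\mathbf 1_{e\in\mathcal T}\bigr)\right].
\]
Using $N_e=N_{(x,y)}+N_{(y,x)}$, I would distribute $\beta^{N_e}$ together with the $q$-weights across the two orientations of $e$ so that the edge-wise structure becomes transparent.

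I would first freeze $\mathcal L$ and apply (\ref{eq3}) to the $\mathcal T$-average with the $L$-dependent edge coefficients $\tilde b_e(L)=b_e(\beta q_{(x,y)})^{N_{(x,y)}(L)}(\beta q_{(y,x)})^{N_{(y,x)}(L)}$ and $\tilde c_e(L)=c_e\,q_{(x,y)}^{N_{(x,y)}(L)}q_{(y,x)}^{N_{(y,x)}(L)}$. This rewrites the $\mathcal T$-expectation as a Fermi--Fock inner product $\langle 1,\prod_e[\tilde b_e(L)(1-\vert d\psi_e\vert^2)+\tilde c_e(L)\vert d\psi_e\vert^2]\,1\rangle$ whose coefficients are still $L$-dependent. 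I would then take $\mathbb E_{\mathcal L}$ and commute it through this Fermi inner product, which is legitimate because the integrand is polynomial in the Grassmann variables.

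Next I would expand the edge-wise Fermi product as a sum over subsets $S\subset E$ (the set of edges where the $\tilde c_e|d\psi_e|^2$ branch is selected). The $L$-dependent factor of the summand indexed by $S$ has the shape $e^{-\sum_x\chi_x\hat L^x}\prod_{e^o}(\tilde q^S_{e^o})^{N_{e^o}(L)}$ with $\tilde q^S_{e^o}=q_{e^o}$ when $e\in S$ and $\tilde q^S_{e^o}=\beta q_{e^o}$ otherwise. Applying (\ref{eq2}) term by term converts each loop integral into a Bose--Fock exponential with exponent $\tfrac12\sum_{x,y}C_{x,y}[\tilde q^S_{x,y}-1]\phi(x)\bar\phi(y)-\tfrac12\sum_x\chi_x\phi(x)\bar\phi(x)$. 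Because this exponent is a sum over edges, the exponential factors edge-wise, and the finite sum over $S$ reassembles into a single edge-wise product in the tensor $\phi\otimes\psi$ Fock space, producing exactly the supersymmetric expression claimed.

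Specialising to $b\equiv c\equiv 1$, $q\equiv 1$, $\chi\equiv 0$ makes the $c$-branch collapse to $\vert d\psi_e\vert^2$ and the $b$-branch to $(1-\vert d\psi_e\vert^2)e^{\frac12C_{x,y}(\beta-1)(\phi(x)\bar\phi(y)+\phi(y)\bar\phi(x))}$, yielding the $Z^{(\beta)}$ identity. The main nuisance I anticipate is purely bookkeeping: tracking both orientations of each unoriented edge through the Hadamard products and verifying that the reassembly from the $S$-sum back into a single edge-wise product is coefficient-faithful. The interchange of $\mathbb E_{\mathcal L}$ and $\mathbb E_{\mathcal T}$ with the Fock inner products is automatic by bilinearity since both the Fermi integrand and the Bose exponent are polynomial/analytic in the fields, so no analytic subtlety arises.
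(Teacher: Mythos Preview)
Your proposal is correct and uses the same two ingredients as the paper's proof, namely the Bose identity (\ref{eq2}) and the Fermi identity (\ref{eq3}), together with the independence of $\mathcal T$ and $\mathcal L$. The only difference is the order of integration: the paper fixes $T$ first, applies (\ref{eq2}) to evaluate the $\mathcal L$-expectation (obtaining a Bose inner product whose edge factors depend on whether $e\in T$), and then applies (\ref{eq3}) with those Bose-operator-valued edge coefficients to perform the $\mathcal T$-average. You reverse this, doing (\ref{eq3}) first with $L$-dependent numerical coefficients and then (\ref{eq2}) after an explicit subset expansion. The paper's order is slightly shorter because (\ref{eq2}) applies directly for each fixed $T$ without any preliminary expansion; conversely, your subset expansion is exactly the multilinearity argument that justifies the paper's tacit use of (\ref{eq3}) with operator-valued $b_e,c_e$, so the two routes are really mirror images of each other.
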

  \begin{proof}
  The first expression equals:   $\frac{1}{Z^{(\beta)}}\int\prod_e (b_e  \beta^{N_e( L)} 1_{e\not\in \mathcal{ T}}+c_e  1_{e\in \mathcal{ T}}) \prod_{e^o} q_{e^o}^{N_{e^o}(L)}e^{-\sum_x\chi_x\hat { L}^x} \\
  \,\mathbb P_{\cal L}(dL)\,\mathbb P_{\cal L}(T)$\\
  
 $ =\frac{1}{Z^{(\beta)}}\int\prod_{x, y}\;[1_{\{x,y\}\not\in T}\;b_{\{x,y\}} (\beta q_{(x,y)})^{N_{(x,y)}(L)} +1_{\{x,y\}\in T}\;c_{\{x,y\}} ( q_{(x,y)})^{N_{(x,y)}(L)}]\;e^{-\sum_x\chi_x\hat { L}_x} \\
  \mathbb P_{\cal L}(dL)\,\mathbb P_{\cal L}(T)$\\
  
 $=\frac{1}{Z^{(\beta)}}\int \langle 1,\; e^{-\frac{1}{2}\sum_x\chi_x \phi(x)\bar\phi(x)}\prod_{{\{x,y\}}\not\in T}b_{\{x,y\}}e^{\frac{1}{2}C_{x,y}( [ \beta q_{(x,y)}-1]\phi(x)\bar\phi(y)+[ \beta q_{(y,x)}-1]\phi(y)\bar\phi(x))}\\ \prod_{{\{x,y\}}\in T}c_{\{x,y\}}e^{\frac{1}{2}C_{x,y}( [ q_{(x,y)}-1]\phi(x)\bar\phi(y)+[ q_{(y,x)}-1]\phi(y)\bar\phi(x))} 
  \;1\rangle\,\mathbb P_{\cal L}(T )$\\
  
 $\frac{1}{Z^{(\beta)}}\langle 1,\;e^{-\frac{1}{2}\sum_x\chi_x \phi(x)\bar\phi(x)}
    \prod_{\{x,y\}} [b_{\{x,y\}}e^{\frac{1}{2}C_{x,y}( [ \beta q_{(x,y)}-1]\phi(x)\bar\phi(y)+[ \beta q_{(y,x)}-1]\phi(y)\bar\phi(x))}(1-\vert d\psi_{\{x,y\}}\vert^2)
   + c_{\{x,y\}}  \prod_{\{x,y\}} e^{\frac{1}{2}C_{x,y}( [ q_{(x,y)}-1]\phi(x)\bar\phi(y)+[ q_{(y,x)}-1]\phi(y)\bar\phi(x))} \vert d\psi_{\{x,y\}}\vert^2]
   \;1\rangle$
  
  \end{proof}\\
  Note that for $\beta $ close to 1, the joint distribution $\mathbb{P}^{(\beta)}_{\cal T, \cal L}$ is a perturbation of the product $\mathbb P_{\cal L}\otimes \mathbb P_{\cal T}$.  The Fock space representation allows to expand the partition function and related expressions according to powers of $1-\beta$.\\
  A similar representation can be given for $\mathbb{P}^{(b*)}_{\cal T, \cal L}$.
%
\section{Connections and  holonomy}
Consider a finite group $M$ and a $M$-connection $A$ on the graph. A $M$-connection can be defined as an equivalence class of maps $m$ from oriented edges into $M$, such as opposite orientations have inverse images. $m$ is equivalent to $m'$ if and only if there exist a map $h$ from vertices into $M$ such that $m'_{x,y}=h_x m_{x,y}h_y^{-1}$. The choice of a representative $m$ of a connection $A$ is often refered to as a choice of gauge. A connection defines a non-ramified cover of the graph. Fibers have cardinality $\vert M \vert$ and $M$ acts faithfully and transitively on them. The conductances and the killing measure can be lifted to the covering graph. We denote by $G^{(A)}$ the associated Green function and by $\mathcal{L}^{(A)}$ the associated loop ensemble.  \\
If $M= \mathbb Z/ 2  \mathbb Z$ , we note that connections are defined by percolation configurations.\\Given a connection $A$, any loop $l$ defines a conjugacy class of $M$ , denoted $H_A(l)$. It is obtained by choosing a base point in $l$, some $m$ representing $A$,  by multiplying the group elements assigned to the edges of the loop in cyclic order and by taking the conjugacy class of the product. Clearly, the holonomy depends only on the geodesic (i.e. non-backtracking) loop associated with $l$ by removing tree-like subloops. Geodesic loops represent the conjugacy classes of the fundamental group.  \\The projection of the loop ensemble $\mathcal{L}^{(A)}$ on the graph $\cal{G}$ is the set of loops of trivial holonomy in  $\mathcal{L}_{\vert M\vert}$, the union of $\vert M\vert$ independent copies of  $\mathcal{L}$ (which is a Poisson process with intensity $\vert M\vert \mu$) (see \cite{gaugeloops}). Denoting by $\iota$ the unity of $M$, image of $\mathbb P_{\mathcal{L}^{(A)}}$  is $\frac{\det(G)^{\vert M\vert}}{\det(G^{(A)})}  \prod_{l \in \mathcal{L}_{\vert M\vert}}  1_{\iota}[H_A(l)]) \mathbb P_{\mathcal{L}_{\vert M\vert}}(dL)$.
Conversely, the loop ensemble $\mathcal{L}^{(A)}$ can be constructed by taking independently and uniformly a lift of all loops of trivial holonomy in  $\mathcal{L}_{\vert M\vert}$.

The counterpart of this property in Fock space is that in a given gauge, the density of the Gaussian free field on the cover with respect to the densities of $\vert M\vert$ independent free fields $\phi_i, \; i\in M$ on $X$ is given by :\\

$$\prod_{x\neq y}\exp(\sum_{i,j} C_{x,y} (\delta_{i,m\cdot j}-\delta_{i,j})\phi_i(x)\bar\phi_j(y))$$

In particular,\\
\begin{equation}\label{equ2}
 \begin{split}
&E(\prod_{e^o} q_{e^o}^{N_{e^o}(\mathcal{ L}^{(A)})}e^{-\sum_x\chi_x\hat { \mathcal{L}}^x_{\vert M\vert}})=E(\prod_{e^o} q_{e^o}^{N_{e^o}(\mathcal{ L}_{\vert M\vert})} e^{-\sum_x\chi_x\hat{ \mathcal{L}}^x_{\vert M\vert}}\prod_{l \in \cal L} 1_{\iota}[H_A(l)])\\
&=\langle 1,\;  \prod_{x\neq y}e^{(\sum_{i} C_{x,y} (q_{x,y}-1 )\phi_i(x)\bar\phi_i(y)}e^{-\frac{1}{2}\sum_{x,i}\chi_x \phi_i(x)\bar\phi_i(x)}\prod_{x\neq y}e^{\sum_{i,j} C_{x,y}  \delta_{i,m\cdot j}-\delta_{i,j })\phi_i(x)\bar\phi_j(y)}\;1\rangle .\\
&=\langle 1,\; e^{-\frac{1}{2}\sum_{x,i}\chi_x \phi_i(x)\bar\phi_i(x)}\prod_{x\neq y}e^{\sum_{i,j} C_{x,y}  q_{x,y}(\delta_{i,m\cdot j}-\delta_{i,j })\phi_i(x)\bar\phi_j(y)}\;1\rangle . 
\end{split}
\end{equation}

%

\section{Interaction with connections}
Given a spanning tree $T$, we say that $m$ is $T$-reduced if $m_{x,y}=\iota$ for all edges $\{x,y\}$ of $T$. One easily sees that any connection has a unique $T$-reduced representative. If $\gamma$ is a symmetric probability on $m$, assigning to the edges of the tree, oriented arbitrarily, independent $\gamma$-distributed random elements of $M$ defines a random connection $\cal A$. Its distribution $\gamma^T$ does not depend on the chosen orientation. Then $\sum_T\mathbb P_{\mathcal T}(T)\gamma^T$ is a natural distribution on the space of connections and $(T,A)\rightarrow\mathbb P_{\cal T}(T)\gamma^T(A) $ a joint probability distribution on spanning tree and connections. The tree and the connection are generally not independent, unless $\gamma$ is chosen to be uniform.\\ 
Any non-negative central function $\Phi$ on $M$ defines a distribution on triples $(T,A,L)$, $L$ denoting a countable family of time continuous loops on the graph, not visiting $\Delta$:
$$\nu_{\Phi}(T,A,dL)=\frac{1}{Z_{\Phi}}\prod_{l\in L}\Phi(H_{A}(l)\gamma^{T}(A)\mathbb P_{\cal T}(T) \mathbb P_{\mathcal{ L}_{\vert M\vert}}(dL).$$

$Z_{\Phi}$ denotes a normalization constant (partition function).\\
 $\Phi$ can be decomposed using the characters of the unitary representations of $M$.\\
 
A natural choice of $\Phi$ is $1_{\iota}$. Then the partition function is $$\sum_{T,A}\frac{\det(G)^{\vert M \vert}}{\det(G^{(A)})}\mathbb P_{\cal T}(T)\gamma^T(A) .$$
 Note that if $\gamma$ is close to $\delta_{\iota}$,  the joint distribution of $ T$ and $L$ is close to $\mathbb P_{\mathcal{ L}_{\vert M\vert}}\otimes \mathbb P_{\cal T}.$  The Fock space representation can yield a perturbation expansion of the partition function and related expressions.
%



\bigskip

\noindent
NYU Shanghai. 1555 Century Blvd, Pudong New District. Shanghai. China. \\
and\\
  D\'epartement de Math\'ematique. Universit\'e Paris-Sud.  Orsay, France.\\
  
   \bigskip   
\noindent 
   yves.lejan at math.u-psud.fr\\
   and\\ 
   yl57 at nyu.edu

\end{document}